\documentclass[12pt]{amsart}

%%%%%%%%%%%%%%%%%%%%%%%%%%%%%%%%%

%\setlength{\textwidth}{5.6in}
%\hoffset=-0.4in
%\setlength{\textheight}{8.2in}
%\voffset=0.1in

%%%%%%%%%%%%%%%%%%%%%%%%%%%%%%%%%%%%

\title{Counting isotropic tangent lines of hypersurfaces}
\author{Sergei Lanzat}
\address{Max-Planck-Instituts f\"{u}r Mathematik, 53111
Bonn, Germany}
\email{serjl@mpim-bonn.mpg.de}
\date{\today}
\makeatletter

%%%%%%%%%%%%%%%%%%%%%%%%%%%%%%%%%%%%%%%%

\usepackage{amsmath,amssymb,amsthm,amsfonts, mathrsfs,amscd}
\usepackage{dsfont}
\usepackage{latexsym}
\usepackage[pdftex]{graphicx}
\input xy
\xyoption{all}
\usepackage{bbm}

\hyphenation{displa-ce-able}
%%%%%%%%%%%%%%%%%%%%%%%%%%%%%%%%%%%%%%%%%%

\renewcommand{\(}{\left(}
\renewcommand{\)}{\right)}

%%%%%%%%%%%%%%%%%%%%%%%%%%%%%%%%%

\newcommand{\FAT}[1]{\mbox{{$\mathbb{#1}$}}}
\newcommand{\Fat}[1]{\mbox{{$\scriptstyle\mathbb{#1}$}}}

%\newcommand{\hd}{\mbox{$d_{L^{(1,\infty)}}$}}

%%%%%%%%%%%%%%%%%%%%%%%%%%%%%%%%%

\newcommand{\ZZ}{\FAT{Z}}

\newcommand{\NN}{\FAT{N}}

\newcommand{\RR}{\FAT{R}}

\newcommand{\CC}{\FAT{C}}
\newcommand{\SSS}{\FAT{S}}

\newcommand{\rr}{\Fat{R}}

\newcommand{\sss}{\Fat{S}}

%%%%%%%%%%%%%%%%%%%%%%%%%%%%%%%%%%%%%%%

\newcommand{\minus}{\smallsetminus}
\newcommand{\eps}{\varepsilon}

\newcommand{\Id}{\mathds{1}}

%%%%%%%%%%%%%%%%%%%%%%%%%%%%%%%%%%%%%%%%%%%%

\newcommand{\be}{\begin{itemize}}
\newcommand{\ee}{\end{itemize}}

\newcommand{\beq}{\begin{equation}}
\newcommand{\eeq}{\end{equation}}

\newcommand{\beqn}{\begin{equation}\nonumber}

\newcommand{\bea}{\begin{equation}\begin{aligned}}
\newcommand{\eea}{\end{aligned}\end{equation}}

\newcommand{\bean}{\begin{equation}\nonumber\begin{aligned}}

%%%%%%%%%%%%%%%%%%%%%%%%%%%%%%%%%%%%%%%%%%%%%%

\DeclareMathOperator{\End}{End}
\DeclareMathOperator{\Span}{Span}

\DeclareMathOperator{\Crit}{Crit}
\DeclareMathOperator{\ind}{ind}

\DeclareMathOperator{\sgn}{sgn}
\DeclareMathOperator{\Imm}{Imm}

%%%%%%%%%%%%%%%%%%%%%%%%%%%%%%%%%%%%%%%%%%%%%%%%

%\def\id{{1\hskip-2.5pt{\rm l}}}

%%%%%%%%%%%%%%%%%%%%%%%%%%%%%%%%%%%%%%%%%%%%%%%%

\newtheorem{thm}{Theorem}[section]
\newtheorem{thm*}{Theorem}
\newtheorem{lem}[thm]{Lemma}
\newtheorem{lem*}[thm*]{Lemma}
\newtheorem{prop}[thm]{Proposition}
\newtheorem{cor}[thm]{Corollary}

\newtheorem{defn}[thm]{Definition}
\newtheorem{thm-defn}[thm]{Theorem-Definition}

\theoremstyle{definition}

%%%%%%%%%%%%%%%%%%%%%%%%%%%%%%%%%%%%%%%%%%%%%%%%%

\begin{document}

\begin{abstract}
Consider the standard symplectic $(\RR^{2n}, \omega_0)$, a
point $p\in\RR^{2n}$ and an immersed closed 
orientable hypersurface $\Sigma\subset\RR^{2n}\minus\{p\}$, all
in general position. We study the following passage/tangency question: 
how many lines in $\RR^{2n}$ pass through $p$ and tangent to $\Sigma$
parallel to the 1-dimensional characteristic distribution 
$\ker\left(\omega_0\big|_{T\Sigma}\right)\subset T\Sigma$ of $\omega_0$.
We count each such line with a certain sign, and present an explicit
formula for their algebraic number. This number is invariant under  regular
homotopies in the class of a general position of the pair $(p, \Sigma)$, but
jumps (in a well-controlled way) when during a homotopy we pass a certain
singular discriminant. It provides a low bound to the actual number of these
isotropic lines.

\end{abstract}

\keywords{immersed hypersurfaces, isotropic lines, almost contact manifolds, shape operator, Gauss map}
\subjclass[2010]{53A07, 53C15, 53D05, 53D15, 57N16, 57N35, 57R42}

\maketitle

\section{Introduction}
\subsection{Immersions and their invariants.}\label{subsection: immersions}
The study of the topology of the space of immersions  $\Imm(X,Y)$ of a smooth manifold
$X$ into a smooth manifold $Y$ is a famous classical problem. Recall that  a smooth
map $f: X\to Y$ is called an immersion  if its differential $df$ is everywhere injective. 
The space $\Imm(X,Y)$ equipped with the $C^\infty$-topology is an open subset of 
the Fr\'{e}chet manifold $C^\infty(X, Y)$. 

The study of $\Imm(X,Y)$ becomes especially 
interesting  if $X$ and $Y$ are orientable and oriented manifolds.  For example, one can study 
the path-connectedness of $\Imm(X,Y)$. Two immersions are regularly homotopic if they can 
be connected by a continuous path of immersions, i.e.  they belong to the same path-component of $\Imm(X,Y)$. 
Already in the simplest cases one gets non-trivial results. In particular, Whitney \cite{Whitney} classified 
immersions of the oriented unit circle $\SSS^1$ into the oriented Euclidean plane $\RR^2$, 
i.e. oriented plane immersed curves 
$\Gamma: \SSS^1\to\RR^2$ up to regular homotopy:  path-components of $\Imm(\SSS^1, \RR^2)$ are in  a natural bijection with
integers $\ZZ$. The bijection is given by the Whitney index $\ind(\Gamma)$ - the degree of the tangential 
Gauss map  $G_T:\SSS^1\to\SSS^1$ given by  $G_T(s)=\cfrac{\Gamma^\prime(s)}{\|\Gamma^\prime(s)\|}$. Note that 
the Whitney index  $\ind: \Imm(\SSS^1, \RR^2)\to\ZZ$ is a locally constant function, i.e. it is an invariant 
of immersions up to regular homotopies. The Whitney index $\ind(\Gamma)$ can also be expressed in terms of the 
normal Gauss map as follows. Fix the standard orientations: $o_{\sss^{1}}$ on $\SSS^1$ and $o_{\rr^{2}}$ on $\RR^2$. 
Let us coorient $\Gamma(\SSS^1)$ by choosing a normal vector field $N$ on it satisfying 
$N\times o_{\Gamma(\sss^1)} = o_{\rr^{2}}$. Then  $\ind(\Gamma)$ equals to the degree 
of the normal Gauss map $G_N:\SSS^1\to\SSS^1$ given by $G_N(s)=N(\Gamma(s))$. 

The following Morse theoretical  interpretation of $\ind(\Gamma)$ provides an interesting connection between the Morse theory 
and the degree theory. Let  $h: \Gamma(\SSS^1)\to\RR$ 
be a Morse function, such that its set of critical points $\Crit(h)$ does not contain singular points of the curve $\Gamma$. 
For example, one can take a generic height function  $h(p):=\langle p - p_0, v\rangle$, where $p_0\in\RR^2$ and $v\in\SSS^1$, 
see Figure \ref{fig:index}.

\begin{figure}[htb]
\centering
    \includegraphics[width=2.5in]{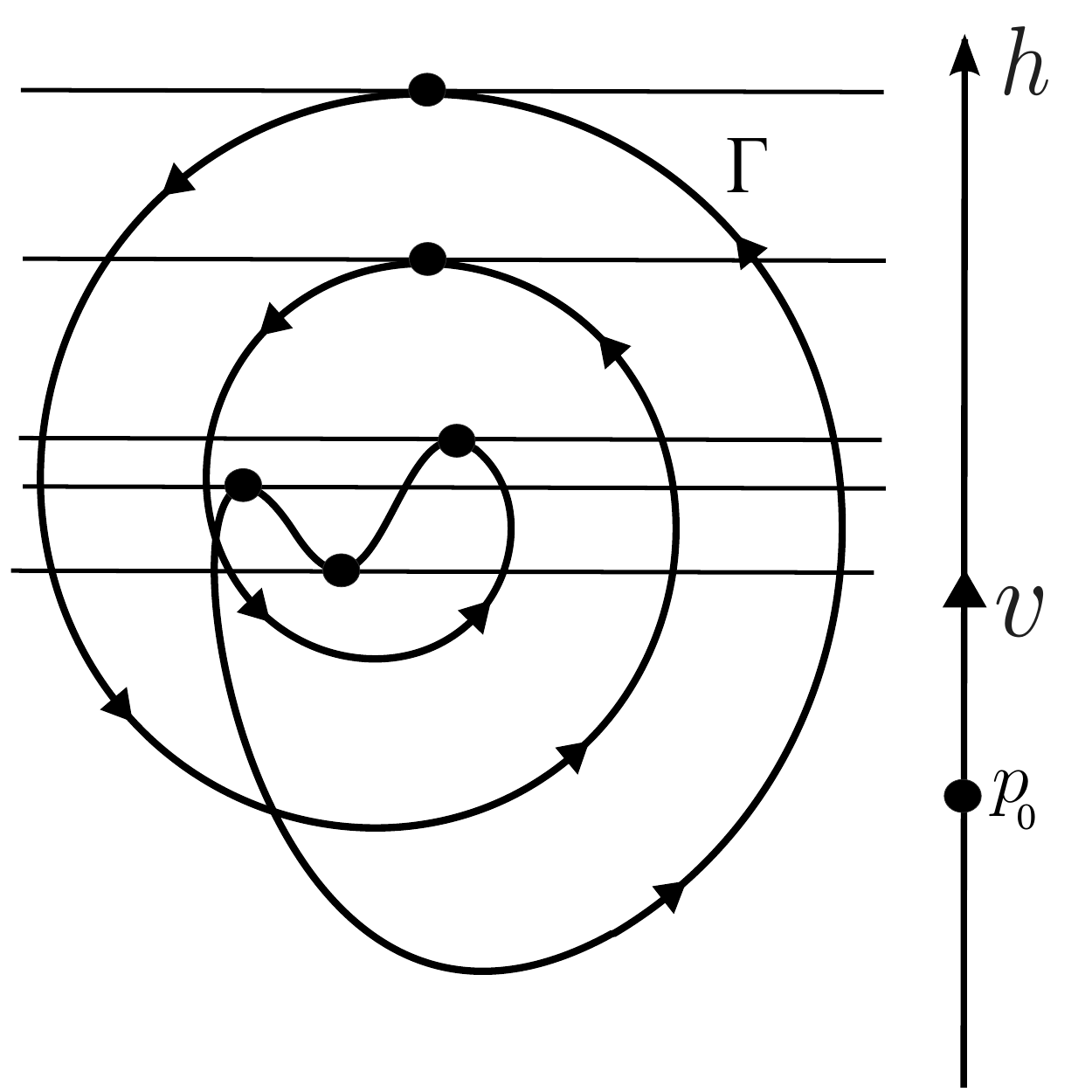}
\caption{Whitney index via Morse function.\label{fig:index}}
\end{figure}

Then $\Crit(h)$ splits into $\Crit^+(h)\sqcup\Crit^-(h)$, where 
$$\Crit^+(h):=\{p\in\Crit(h)| N(p) = + v\}$$ 
and  
$$\Crit^-(h):=\{p\in\Crit(h)| N(p) = - v\}.$$
In particular, we can use  "half" of critical points of $h$ to compute the Whitney index. 
Namely, if we denote by $\mu_h(p)$ the Morse index of a critical point $p$ of the function $h$, then  
$$\ind(\Gamma)=\sum\limits_{p\in\Crit^+(h)}(-1)^{\dim(\sss^1)-\mu_h(p)}=\sum\limits_{p\in\Crit^-(h)}(-1)^{\mu_h(p)}.$$
Indeed, $\ind(\Gamma)=\deg(G_N)$ and since $\pm v$ are regular values of $G_N$ we have
$$\deg(G_N) = \sum\limits_{G_N(p)=+v}\deg_pG_N = \sum\limits_{G_N(p)=-v}\deg_pG_N.$$
Finally, we observe that  local degrees satisfy  $\deg_pG_N=(-1)^{\dim(\sss^1)-\mu_h(p)}$, if $p\in\Crit^+(h)$ 
and  $\deg_pG_N=(-1)^{\mu_h(p)}$, if $p\in\Crit^-(h)$. It is worth pointing out that 
$$\sum\limits_{p\in\Crit^+(h)}(-1)^{\dim(\sss^1)-\mu_h(p)}=-\sum\limits_{p\in\Crit^+(h)}(-1)^{\mu_h(p)}$$
and thus,
$$\chi(\SSS^1)=\sum\limits_{p\in\Crit}(-1)^{\mu_h(p)}=\sum\limits_{p\in\Crit^+(h)}(-1)^{\mu_h(p)}+\sum\limits_{p\in\Crit^-(h)}(-1)^{\mu_h(p)}=0.$$
Note also that while the Whitney index $\ind(\Gamma)$ was defined as a discrete sum of signs (signed points), it can as well be expressed 
as a continuous sum. Namely, by Hopf's Umlaufsatz \cite{Hopf}, we have the integral formula
$$ \int_{\sss^1}G_N^*\mu = \ind(\Gamma),$$ where  $\mu\in\Omega^1(\SSS^1;\RR)$ is the volume form normalized by  $\displaystyle \int_{\sss^1}\mu=1$.

The above description immediately  generalizes to  higher dimensions. Indeed, consider the space $\Imm(S, \RR^m)$, 
where $S$ is a smooth closed orientable manifold of dimension $m-1$.  Fix orientations: the standard $o_{\rr^{m}}$ on $\RR^m$ and $o_S$ on $S$.  Let $\imath: S\to\RR^m$ be an immersion. Then $\Sigma:=\imath(S)\subset\RR^m$ is an immersed orientable hypersurface oriented by $o_{\Sigma}:=\imath_*(o_S)$. 
Let us coorient $\Sigma$ by choosing a normal vector field $N$ on it satisfying $N\times o_{\Sigma} = o_{\rr^m}$. 
Define the Whitney index $\ind(\Sigma)$ of $\Sigma$ to be the degree of the normal Gauss map
$G_N:S\to\mathbb{S}^{m-1}$, $G_N(s) = N(\imath(s))$. By choosing  a  Morse height function $h$
on $\Sigma$ as before, we get  that
$$\ind(\Sigma)=\sum\limits_{p\in\Crit^+(h)}(-1)^{\dim(S)-\mu_h(p)}=\sum\limits_{p\in\Crit^-(h)}(-1)^{\mu_h(p)}.$$
Now, we observe the principal difference between even and odd dimensional cases. Indeed, if  $\dim(S)$ is even, then  
$$\sum\limits_{p\in\Crit^{\uparrow}(h)}(-1)^{\dim(S)-\mu_h(p)}=\sum\limits_{p\in\Crit^{\uparrow}(h)}(-1)^{\mu_h(p)}$$
and so 
$$2\ind(\Sigma)=\sum\limits_{p\in\Crit(h)}(-1)^{\mu_h(p)}=\chi(S),$$
where $\chi(S)$ is the Euler characteristics of $S$. It means that $\ind$ is a constant function $\frac12\chi(S)$ on the space
$\Imm(S, \RR^m)$. In the odd dimensional case, $\ind$ is already a locally constant function, which depends on topological types of immersions. Note also that in the latter case we have $\chi(S)=0$. 
%Now, let $A:\SSS^{m-1}\to\SS^{m-1}$ be the antipodal map $A(w)=-w$, then 
%$$
%\sum\limits_{G_N(p)=-v}\deg_pG_N = \deg(A)\sum\limits_{G_N(p)=+v}\deg_pG_N = (-1)^m\sum\limits_{G_N(p)=+v}\deg_pG_N.
%$$
So we have a constant function $\chi(S)$ on the space $\Imm(S, \RR^m)$, which, in the case of $\dim(S)$ is odd, naturally splits into an invariant of immersions up to regular homotopies.

The above description of  $\ind(\Sigma)$ has the following enumerative meaning: 
$2\ind(\Sigma)$  it is an algebraic (with the above Morse signs) 
number of affine tangent spaces to $\Sigma$ parallel to a given one. 
In turn, this can be seen as an algebraic count of affine tangent spaces to $\Sigma$ that pass through $(m-2)$-dimensional space at "infinity". So one can generalize this very special case to a (algebraic) count of affine tangent spaces to $\Sigma$ that pass 
through a generic $(m-2)$-dimensional affine space $P\subset\RR^m\minus\Sigma$. In Morse theoretical terms it means that we consider 
now a Morse function $h$, which is a projection of  $\Sigma$ from $P$ onto a generic directed line, see Figure \ref{fig:coneMorse}.  
\begin{figure}[htb]
\centering
    \includegraphics[width=2.8in]{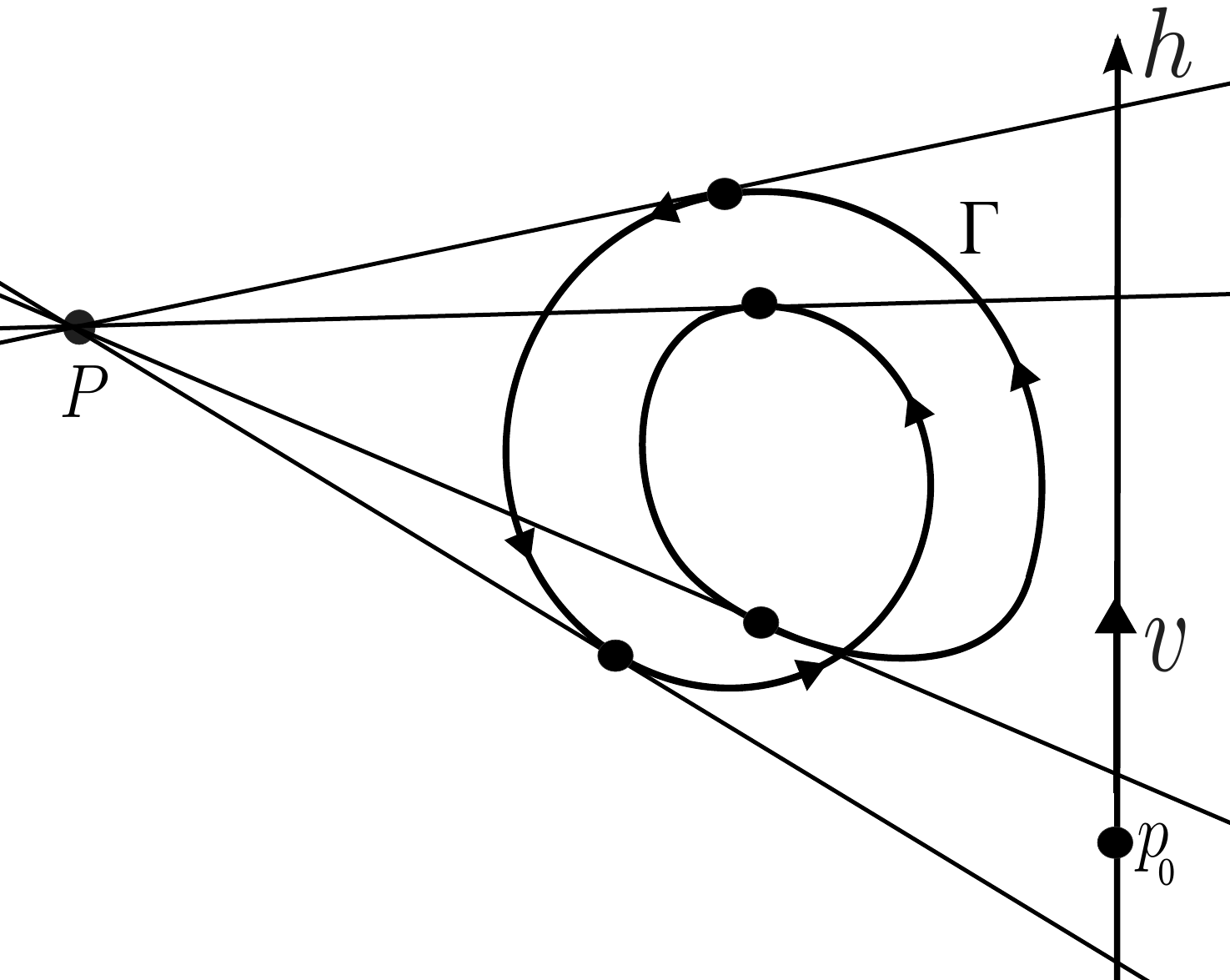}
\caption{Tangent lines through a point.\label{fig:coneMorse}}
\end{figure}
The plane case of $m=2$ was considered in \cite{L-P1}. Namely, let $\mathcal{L}:=\mathcal{L}(p,\Gamma)$ be a set
of lines in $\RR^2$ passing through a fixed point $p$ and tangent to a (generic) oriented immersed
plane  closed curve $\Gamma$. Each such line $l\in \mathcal{L}$ was counted with a certain sign $\eps_l$, so that the total algebraic number $\mathcal{N}(p, \Gamma)=\sum_{l\in\mathcal{L}}\eps_l$ of lines does not change under homotopy of $\Gamma$ in
$\RR^2\minus p$.  One can guess such a sign rule as follows. 
\begin{figure}[htb]
\centering
    \includegraphics[width=5in]{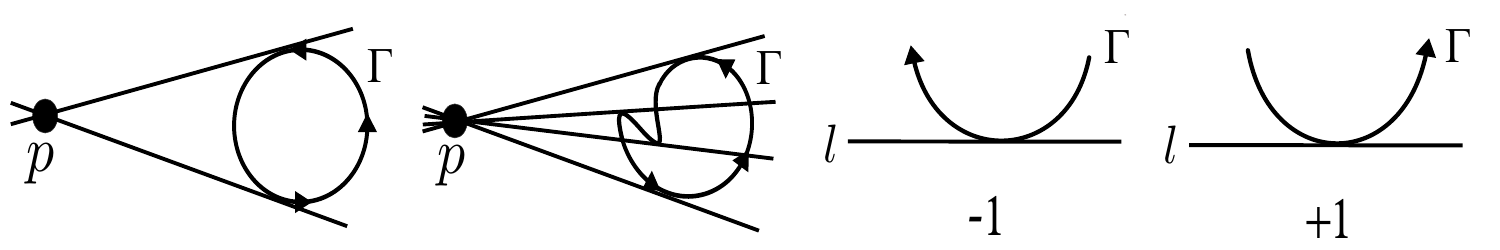}
\centerline{\hspace{0.1in}a\hspace{2.4in}b}
\caption{Counting lines with signs.\label{fig:morse}}
\end{figure}
Under a deformation shown in Figure \ref{fig:morse}a, two new lines
appear, so their contributions to $\mathcal{N}(p, \Gamma)$ should cancel out.
Thus, their signs should be opposite and one gets the sign rule shown in Figure
\ref{fig:morse}b. 
It follows (see \cite{L-P1}) that 
\begin{equation}\label{eq:N motivation}
\mathcal{N}(p, \Gamma)=2\ind(\Gamma)-2\ind_p(\Gamma)\ .
\end{equation}
\noindent Here, $\ind_p(\Gamma)$ is the index of $p$ w.r.t. $\Gamma$, i.e. the number
of turns made by the vector connecting $p$ to a point $q\in\Gamma$,
as $q$ passes once along $\Gamma$ following the orientation. It may
be computed as the intersection number $I([p,\infty],\Gamma;\RR^2)$
of a $1$-chain $[p,\infty]$ (i.e. an interval connecting $p$ with a point near
infinity of $\RR^2$) with an oriented $1$-cycle $\Gamma$ in $\RR^2$.
The appearance of $\ind(\Gamma)$ and $\ind_p(\Gamma)$ in the above
formula comes as no surprise: in fact, these are the only invariants
of the curve $\Gamma$ under its homotopy in the class of immersions
in $\RR\minus p$.

In contrast to the plane case, in higher dimensions we do not have a natural choice of
a tangent direction on an immersed hypersurface  $\Sigma:=\imath(S)\subset\RR^m$, 
$\imath\in\Imm(S, \RR^m)$,  such that the degree of the corresponding  tangent Gauss 
map is a locally constant function on $\Imm(S, \RR^m)$. Nevertheless, such a tangent 
direction can be chosen in the presence of additional structures on the target space $\RR^m$. 
Indeed, let us consider the following equivalent formulation of the plane case. 
Equip $\RR^2$ with the standard symplectic structure $\omega_0$ and ask how many  
lines in $\RR^2$ pass through $p$ and tangent to
$\Gamma$ parallel to 1-dimensional characteristic distribution 
$\ker\left(\omega_0\big|_{T\Gamma}\right)$ of $\omega_0$. 
Indeed, in the 2-dimensional case $\Gamma$ is a Lagrangian immersed 
submanifold, i.e. $\ker\left(\omega_0\big|_{T\Gamma}\right)=T\Gamma$, and the
new formulation is equivalent to the previous one. So in the present paper we
study the following question. Consider the standard  symplectic 
$(\RR^{2n}, \omega_0),\; n\in\NN$ with the standard orientation, a point $p\in\RR^{2n}$ and an
immersed closed  oriented hypersurface $\Sigma\subset\RR^{2n}\minus\{p\}$, all
in general position. How many lines in $\RR^{2n}$ pass through $p$ and tangent
to $\Sigma$ parallel to the 1-dimensional characteristic distribution 
$\ker\left(\omega_0\big|_{T\Sigma}\right)\subset T\Sigma$ of $\omega_0$? 
Such lines will be called isotropic lines of $\Sigma$ passing through $p$.

\subsection{Main results and the structure of the paper.}
Let $p\in\RR^{2n}$ and $\Sigma\subset\RR^{2n}\minus\{p\}$ be as before. 
We use the natural almost contact structure and the shape operator of $\Sigma$
in order to equip each isotropic line with a certain sign of tangency. We count each such
line with this sign and present an explicit formula for their algebraic number.
This number is invariant under regular homotopies in the class of a general position 
of the pair $(p, \Sigma)$, but jumps (in a
well-controlled way) when during a homotopy we pass a certain singular
discriminant.  It provides a low bound to the actual number of these
isotropic lines.

The paper is organized in the following way. In Section 2 we
introduce objects of our study, define signs of tangency, list the
requirements of a general position, and formulate the main theorem.
Section 3 is dedicated to the proofs. We interpret the desired
number of lines as a certain intersection number; the main claim
follows from different ways of its calculation. We also obtain an integral formula
for that number of lines. 

\subsection*{Acknowledgement.}
I am grateful to Michael Polyak  for the constant encouragement, many fruitful
discussions and valuable suggestions. This work was carried out at Max-Planck-Institut
f\"{u}r Mathematik, Bonn, and I would like to acknowledge its excellent research atmosphere 
and hospitality.

\section{Statement of the main results.}
\subsection{Setting. \label{subsect:setting}}
Consider the standard symplectic $(\RR^{2n},\omega_0)$ with
the fixed standard orientation $o_{\rr^{2n}}$. Let
$p\in(\RR^{2n},\omega_0)$ be a fixed point, let $J_0$ be the
standard almost complex structure on $(\RR^{2n},\omega_0)= (\CC^n,
\omega_0)$, i.e. the multiplication by $\sqrt{-1}$. Suppose
that $S$ is a smooth closed orientable manifold of dimension
$2n-1$ with a fixed orientation $o_S$ and $\imath: S
\looparrowright\RR^{2n}\minus\{p\}$ is an immersion. Then
$\Sigma:=\imath(S)\subset\RR^{2n}\minus\{p\}$ is an immersed
orientable hypersurface oriented by
$o_{\Sigma}:=\imath_*(o_S)$. Let us coorient $\Sigma$ by
choosing a normal vector field $N$ on it satisfying 
$N\times o_{\Sigma} = o_{\rr^{2n}}$. The tangent vector
field $J_0N$ on $\Sigma$ spans the kernel of
$\omega_0|_{T\Sigma}$, i.e. spans isotropic lines of
$\omega_0|_{\Sigma}$. Let $(P, -J_0N, \eta)$ be the 
natural almost contact structure on $\Sigma$ induced 
by $J_0$. Namely,  for  $X\in\Gamma(T\Sigma)$ the $(1,1)$-tensor
$P\in\End(\Gamma(T\Sigma))$ and the 1-form
$\eta\in\Omega^1(\Sigma)$ are given by 
$P(X)=J_0(X)-\langle J_0(X), N\rangle N$ and 
$\eta(X)=\langle J_0(X), N\rangle$, where 
$\langle \cdot, \cdot\rangle$ is the standard 
Euclidean inner product on $\RR^{2n}$ -- see \cite{Blair} for more details. 

Finally, denote by $\mathcal{L}:=\mathcal{L}(p, \Sigma, \omega_0)$  the set
of lines in $\RR^{2n}$ passing through a fixed point $p$ and tangent to 
$\Sigma$ parallel to the 1-dimensional characteristic distribution 
$\ker\left(\omega_0\big|_{T\Sigma}\right)\subset T\Sigma$ of $\omega_0$.

\subsection{General position for the pair $(p,\Sigma)$ and signs of lines.
\label{subsect:generalpositionand signs}}
We shall assume that the following (generic) conditions hold:
\begin{itemize}
\item[1.] The hypersurface $\Sigma$ is generically immersed in
$\RR^{2n}\minus\{p\}$, 
i.e. all its self-intersections are transversal.
\item[2.] Every $\ell\in\mathcal{L}$ is tangent to $\Sigma$ at only one
non-singular point.
\item[3.] If a line $\ell\in\mathcal{L}$ is tangent to the hypersurface $\Sigma$ at a point $\imath(s)$,
then $\det(A_s\pm\lambda_sP_s)\neq 0$, where  $A_s$ 
is the shape operator of the immersion 
$\imath:S\looparrowright\RR^{2n}\minus\{p\}$ at $s$, 
$\lambda_s :=\|\imath(s)-p\|^{-1}$ and $P_s$ is a short notation for 
$(d_s\imath)^{-1}\circ P_{\imath(s)}\circ d_s\imath$. 
Recall, that the shape operator is equal to the differential  $d_sG$ of  the Gauss map 
$G:S\to\mathbb{S}^{2n-1}$, $G(s) = N(\imath(s))$ at the point $s$. 
\end{itemize}
Now, to each $\ell\in\mathcal{L}$ we assign a sign
$\varepsilon_\ell\in\{\pm 1\}$ as follows. 
\begin{defn}
Suppose that a line $\ell$ is tangent to $\Sigma$ at a point $\imath(s)$, 
such that its direction vector $\xi_{\imath(s)}:=\cfrac{\imath(s)-p}{\|\imath(s)-p\|}$ equals to 
$\pm J_0N(\imath(s))$, then define
\begin{equation}
\varepsilon_\ell := \sgn(\det(A_s\pm\lambda_sP_s)).
\end{equation}
\end{defn}

\noindent Note that the sign  $\varepsilon_\ell$ is invariant under a
homothety of $\RR^{2n}\minus\{p\}$ with a positive ratio and
the center at $p$. Indeed, under such a homothety
with a positive ratio $c$, both operators $A_s$ and $\lambda_sP_s$
are multiplied by $c^{-1}$. In addition, for $n=1$, i.e. the case of the toy model 
from Section~\ref{subsection: immersions}, we have that $P=0$. 
In particular, the sign $\varepsilon_\ell$ equals to the sign 
of the curvature of a curve at the point of tangency. 
As a consequence, up to an orientation of the curve it coincides
with Polyak's sign. In the higher dimensional case ($n>1$), 
when $\lambda_s <<1$, i.e. the point $p$ is far away from $\Sigma$,
we have  $$\varepsilon_\ell = \sgn(\det(A_s\pm\lambda_sP_s))=\sgn(\det(A_s))=\sgn(K_s),$$
where $K_s$ is the Gauss curvature of the immersion at $s$. On the other hand, when 
we are passing  zeros of the polynomial $K_s(t)=\det(A_s+tP_s)$, 
the sign $\sgn(K_s)$ may change.  As it will follow from the computations below - 
see Section~\ref{section:proof of the main results}, we may choose
an orientating frame on $T_sS$, such that the operator  $P_s$ is represented by the matrix
$$0\oplus\pm
\( 
\underbrace{\(\begin{array}{cc}0&-1\\1&0 \end{array}\)
\oplus 
\ldots
\oplus  
\(\begin{array}{cc}0&-1\\1&0 \end{array}\)}_{n-1}
\).
$$
Let $(a_{ij})$ be a symmetric $(2n-1)\times(2n-1)$ matrix representing  
the shape operator $A_s$ in the above frame. Then, for example, in the case $n=2$
we have that 
$$
\det(A_s+tP_s)=\det
\(
\begin{array}{ccc}a_{11}&a_{12}&a_{13}\\a_{12}&a_{22}&a_{23}-t\\a_{13}&a_{23}+t&a_{33} \end{array}
\)=a_{11}t^2 + K_s.
$$

\subsection{The statement of the main result.}
Let $\mathcal{N}:=\mathcal{N}(p, \Sigma, \omega_0)$ be the algebraic number
$$\mathcal{N}:=
\sum\limits_{\ell\in\mathcal{L}}\varepsilon_\ell$$ of lines 
in $\RR^{2n}$ passing through $p$ and tangent to 
$\Sigma$ parallel to the 1-dimensional characteristic distribution 
$\ker\left(\omega_0\big|_{T\Sigma}\right)\subset T\Sigma$ of $\omega_0$. 
Denote $\ind(\Sigma):=\deg(G)$ -- the degree of the Gauss map 
(higher dimensional Whitney index). Denote also
$\ind_p(\Sigma):= \deg(\xi)$ -- the degree of the map 
$\xi:S\to\mathbb{S}^{2n-1}$ given by
$\xi(s)=\xi_{\imath(s)}:=\cfrac{\imath(s)-p}{\|\imath(s)-p\|}$ 
(higher dimensional index of $p$ w.r.t. $\Sigma$). 
It can  be interpreted as the linking number of a $0$-chain
$\{\infty\}-\{p\}$ with $\Sigma$ in $\RR^{2n}$, where $\{\infty\}$ is a
generic point ''near infinity" of $\RR^{2n}$. Note that the linking number is the intersection
number 
of a $1$-chain $[p,\infty]$ with $\Sigma$ in $\RR^{2n}$. 
The main result of  this work is the following

\begin{thm}\label{thm:main result}
Let $(p,\Sigma)$ be in general position as in
Section~\ref{subsect:generalpositionand signs}. Then
\begin{equation}\label{eqn:main formula}
\mathcal{N}=2\ind(\Sigma)-2\ind_p(\Sigma) 
\end{equation}
In particular, the number $\mathcal{N}$  is invariant 
under local regular homotopies  of the pair $(p, \Sigma)$ in the class of
general
position. 
\end{thm}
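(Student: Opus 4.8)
The plan is to realize $\mathcal{N}$ as an oriented intersection number in a suitable auxiliary manifold, and then compute that intersection number in two ways. Consider the unit sphere bundle, or rather work directly on $S$: an element $\ell\in\mathcal{L}$ tangent at $\imath(s)$ corresponds to a point $s\in S$ at which the radial direction $\xi(s)=\frac{\imath(s)-p}{\|\imath(s)-p\|}$ coincides with $\pm J_0 N(\imath(s)) = \mp(J_0 G)(s)$ (here $J_0N$ spans $\ker(\omega_0|_{T\Sigma})$, as noted in Section~\ref{subsect:setting}). So I would introduce the smooth map
\begin{equation}\label{eq:Phi}
\Phi\colon S\longrightarrow \mathbb{S}^{2n-1}\times\mathbb{S}^{2n-1},\qquad \Phi(s)=\bigl(\xi(s),\, -J_0 G(s)\bigr),
\end{equation}
and observe that $\mathcal{L}$ is precisely $\Phi^{-1}(\Delta)$, where $\Delta\subset\mathbb{S}^{2n-1}\times\mathbb{S}^{2n-1}$ is the diagonal (the $+$ case) together with its analogue for the antidiagonal (the $-$ case); equivalently, pull back to the map $s\mapsto \xi(s)-(-J_0G(s))$ landing in (a neighborhood of) the diagonal. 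Conditions~1--3 of Section~\ref{subsect:generalpositionand signs} guarantee that $\Phi$ is transverse to $\Delta$ along $\Phi^{-1}(\Delta)$: condition~1 makes $\Sigma$ an honest immersed cycle, condition~2 makes the tangency points isolated and unmultiplied, and condition~3 — the nonvanishing of $\det(A_s\pm\lambda_s P_s)$ — is exactly the statement that the differential $d_s\Phi$ is surjective onto a complement of $T\Delta$. Indeed, $d_s\xi$ contributes the term $\lambda_s(\mathrm{Id} - \xi\,\xi^{\mathsf T})$ composed with $d_s\imath$ while $d_s(-J_0G) = -J_0\circ d_sG$ contributes $A_s$ up to the frame identifications, and the relative intersection sign at $s$ works out to $\sgn\det(A_s\pm\lambda_s P_s)=\varepsilon_\ell$. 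This is the step I expect to be the main technical obstacle: writing the differentials of $\xi$ and of $-J_0G$ in the common orienting frame of $T_sS$ described after Definition~2.2, checking the $P_s = 0\oplus\pm(\text{block})$ normal form, and verifying that the Jacobian of $\Phi$ relative to $\Delta$ is (a positive multiple of) $\det(A_s\pm\lambda_s P_s)$, so that the signed count $\sum_{\ell}\varepsilon_\ell$ really is the algebraic intersection number $\Phi\cdot\Delta$.

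Granting that, the computation proceeds homologically. The algebraic intersection number of $\Phi\colon S\to\mathbb{S}^{2n-1}\times\mathbb{S}^{2n-1}$ with the diagonal $\Delta$ depends only on the homotopy class of $\Phi$, hence only on the pair of degrees $\bigl(\deg\xi,\ \deg(-J_0G)\bigr)$, since $[\mathbb{S}^{2n-1}, \mathbb{S}^{2n-1}\times\mathbb{S}^{2n-1}]$ is detected by $H_{2n-1}\cong\ZZ\oplus\ZZ$. Now $J_0$ is orientation-preserving on $\RR^{2n}=\CC^n$, so $-J_0\colon\mathbb{S}^{2n-1}\to\mathbb{S}^{2n-1}$ has a well-defined degree; one checks (e.g. $-J_0$ is homotopic to $-\mathrm{Id}$ through the $U(n)$-rotation $e^{i\theta}$, and $\deg(-\mathrm{Id})=(-1)^{2n}=1$ on the odd sphere $\mathbb{S}^{2n-1}$... more carefully, $-J_0$ is itself in $U(n)\subset SO(2n)$, hence isotopic to the identity, so it has degree $+1$) that $\deg(-J_0\circ G) = \deg G = \ind(\Sigma)$. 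Likewise $\deg\xi=\ind_p(\Sigma)$ by definition. Therefore it suffices to evaluate the intersection pairing on the model class: for maps $S\to\mathbb{S}^{k}\times\mathbb{S}^{k}$ of bidegree $(a,b)$ the signed count of preimages of the diagonal equals $a(-1)^{k}\,? + b\cdot?$ — and the clean way to pin down the coefficients is to use the standard fact that $[\Delta]\in H_k(\mathbb{S}^k\times\mathbb{S}^k)$ is $[\mathrm{pt}\times\mathbb{S}^k] + [\mathbb{S}^k\times\mathrm{pt}]$, so that a bidegree-$(a,b)$ map meets it algebraically in $a + b$ points when $k$ is even and in $a - b$ when $k$ is odd. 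Since here $k=2n-1$ is odd, this gives $\mathcal{N} = \deg\xi - \deg(-J_0G)$ up to an overall normalization; calibrating the normalization against the $n=1$ toy model of Section~\ref{subsection: immersions}, where $\mathcal{N}=2\ind(\Gamma)-2\ind_p(\Gamma)$ by \eqref{eq:N motivation}, fixes the factor $2$ (it arises because for each geometric direction there are two isotropic lines, along $+J_0N$ and $-J_0N$, equivalently the two diagonal-type cycles $\Delta$ and the antidiagonal are both counted). This yields $\mathcal{N}=2\ind(\Sigma)-2\ind_p(\Sigma)$.

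Alternatively, and more in the spirit of the introduction, I would run the Morse-theoretic argument in parallel as a sanity check and to obtain the stated integral formula: take the height/projection function $h$ on $\Sigma$ given by projecting from $p$ (radial distance), whose critical points are exactly the points where $\xi(s)=\pm(\text{tangent characteristic direction})$, split $\Crit(h)=\Crit^+\sqcup\Crit^-$ according to the two signs of $J_0N$, express $\ind(\Sigma)$ and $\ind_p(\Sigma)$ as signed counts over $\Crit^{\pm}$ exactly as in the one-variable discussion (using $\deg_p G = (-1)^{\dim S - \mu_h(p)}$ on $\Crit^+$, etc.), and add the two expressions. The linking-number interpretation of $\ind_p(\Sigma)=I([p,\infty],\Sigma;\RR^{2n})$ makes the homotopy invariance statement transparent: both $\deg G$ and $\deg\xi$ are homotopy invariants of the pair $(p,\Sigma)$ within the class of general position (they can only jump when a tangency degenerates, i.e. when $(p,\Sigma)$ crosses the discriminant where condition~3 fails), so $\mathcal{N}$ is too. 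Finally, applying Hopf's degree theorem as in the $n=1$ case, $\deg G = \int_S G^*\mu$ and $\deg\xi=\int_S\xi^*\mu$ for the normalized volume form $\mu$ on $\mathbb{S}^{2n-1}$, yielding the promised integral formula $\mathcal{N}=2\int_S G^*\mu - 2\int_S \xi^*\mu$.
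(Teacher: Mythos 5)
Your proposal follows essentially the same route as the paper: the paper introduces $M=\mathbb{S}^{2n-1}\times\mathbb{S}^{2n-1}$, the two immersions $\phi_\pm(s)=(\xi(s),\pm J_0 G(s))$ with images $X_\pm$, identifies $\mathcal{L}$ with $X\cap\mathcal{D}$ for $\mathcal{D}$ the diagonal, and then splits the diagonal class as $\Delta_*[\mathbb{S}^{2n-1}]=(\Delta_{\theta_1})_*[\mathbb{S}^{2n-1}]+(\Delta_{\theta_2})_*[\mathbb{S}^{2n-1}]$ (your $[\Delta]=[\mathrm{pt}\times\mathbb{S}^k]+[\mathbb{S}^k\times\mathrm{pt}]$), computing the two resulting contributions as $-2\deg\xi$ and $+2\deg G$. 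So your homological half is the paper's second proposition, except that the paper pins down the signs and the factor $2$ by direct local computation at transverse intersections with $\mathcal{D}_{\theta_1}$ and $\mathcal{D}_{\theta_2}$, whereas you calibrate against the $n=1$ case; that calibration is legitimate only because you first establish that the answer is a universal linear function of the bidegree and because $(\ind_p(\Gamma),\ind(\Gamma))$ ranges over enough pairs in the plane case to determine both coefficients.

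The step you defer --- that the local intersection number at a tangency equals $\varepsilon_\ell=\sgn\det(A_s\pm\lambda_sP_s)$ --- is not a side issue but the content of the paper's first proposition, and without it the theorem is not proved: $\varepsilon_\ell$ is a \emph{definition}, and nothing a priori ties $\sum_\ell\varepsilon_\ell$ to $I(\mathcal{D},X;M)$. The paper carries it out by choosing the adapted frame $(\xi_q,J_0\xi_q,e_1,J_0e_1,\ldots,e_{n-1},J_0e_{n-1})$, writing $d\Delta+d\phi_\pm$ as a block matrix with blocks $\Id_{2n-1}$, $\lambda_s\mathrm{Diag}(0,1,\ldots,1)$, $\Id_{2n-1}$, $B_\pm$, and reducing to $\det\bigl(B_\pm-\lambda_s\mathrm{Diag}(0,1,\ldots,1)\bigr)=\det(A_s\pm\lambda_sP_s)$ using $\det(C_\pm)=1$; your sketch of the differentials of $\xi$ and $-J_0\circ G$ has the right ingredients, but the frame bookkeeping is exactly where the $P_s$ term and the relative sign come from, so this must actually be done. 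Separately, your Morse-theoretic ``sanity check'' is wrong as stated: the critical points of the radial distance from $p$ are the points where $\xi=\pm N$, not where $\xi=\pm J_0N$, and for $n>1$ the isotropic tangency condition is not the critical-point condition of any height or distance function --- which is precisely why the paper abandons the Morse picture of the introduction in favor of the intersection-theoretic one. (Your final integral formula is fine, since it merely restates $\deg G=\int_S G^*\mu$ and $\deg\xi=\int_S\xi^*\mu$.)
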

Note that  the number $\mathcal{N}$ provides a low bound to the actual number of counted
isotropic lines. 

\section{The proof of the main results.}\label{section:proof of the main results}
Consider a smooth manifold  $M:=\SSS^{2n-1}\times\SSS^{2n-1}$. Let 
$X_\pm:=\phi_\pm(S)$ be immersed orientable submanifolds of
$M$ of dimension $2n-1$,  where  immersions
$\phi_\pm:S\to M$
are given by
$$\phi_\pm(s):=(\xi, \pm J_0\circ G)(s)=\left(\xi_q:=\cfrac{q-p}{\|q-p\|}, \pm
J_0(N(q))\right),\ q=\imath(s).$$
Denote $X:=X_+\cup X_-$ and note that 
$X_+\cap X_-=\varnothing$.  In addition, let
$\mathcal{D}:=\Delta(\SSS^{2n-1})$, where $\Delta:\SSS^{2n-1}\to M$
is  the diagonal embedding  $\Delta(q)=(q, q)$.
Every point $x\in X\cap\mathcal{D}$ corresponds bijectively to 
some line $\ell(x)\in\mathcal{L}$. Since the pair $(p, \Sigma)$ is in general 
position, we have that $X$ and $\mathcal{D}$ intersect transversally 
in finitely many points. Consider the intersection number $I(\mathcal{D}, X;M)$
of $\mathcal{D}$ with $X$ in $M$
$$I(\mathcal{D}, X;M)=\sum_{x\in X\pitchfork\mathcal{D}}I_x(\mathcal{D}, X;M),$$
where $I_x(\mathcal{D}, X;M)$ is the local intersection number.

\begin{prop}
For every $x\in X\cap\mathcal{D}$, we have $I_x(\mathcal{D},
X;M)=\varepsilon_{\ell(x)}$.
\end{prop}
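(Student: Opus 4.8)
The plan is to compute the local intersection number $I_x(\mathcal{D}, X; M)$ by writing down explicit bases for the tangent spaces $T_x\mathcal{D}$ and $T_xX$ inside $T_xM = T_y\SSS^{2n-1}\oplus T_y\SSS^{2n-1}$ (where $x = (y,y)$, $y = \xi_q = \pm J_0N(q)$, $q=\imath(s)$), and comparing the orientation they jointly span against the product orientation of $M$. First I would fix the convention that $\mathcal{D}$ carries the orientation pushed forward from the standard orientation of $\SSS^{2n-1}$ by $\Delta$, and that $X_\pm$ carry the orientations pushed forward from $o_S$ by $\phi_\pm$; this makes $I_x$ the sign of the determinant of the matrix whose columns are a positively-oriented basis of $T_x\mathcal{D}$ followed by a positively-oriented basis of $T_xX$, expressed in a positively-oriented basis of $T_xM$. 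The tangent space $T_x\mathcal{D}$ is $\{(v,v): v\in T_y\SSS^{2n-1}\}$, while $T_xX = (d_s\phi_\pm)(T_sS) = \{(d_s\xi(w),\ \pm J_0\circ d_sG(w)): w\in T_sS\}$, and after identifying $T_sS$ with $T_q\Sigma$ via $d_s\imath$ the map $d_s\xi$ is (up to the positive scalar $\lambda_s = \|q-p\|^{-1}$) the orthogonal projection onto $T_q\Sigma$ composed with a shift, and $d_sG$ is the shape operator $A_s$.

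The key computational step is to choose a good oriented frame on $T_q\Sigma$. I would pick the frame described in Section~\ref{subsect:generalpositionand signs} in which $-J_0N$ is the characteristic (Reeb) direction and $P_s$ has the normal form $0\oplus\pm\bigl(\bigl(\begin{smallmatrix}0&-1\\1&0\end{smallmatrix}\bigr)\oplus\cdots\bigr)$, with $A_s$ represented by a symmetric matrix $(a_{ij})$. At the intersection point, $\xi_q = \pm J_0N(q)$, so the first slot of $\phi_\pm$ and the diagonal both "live" at the same point $y$ of the sphere, and the tangent spaces $T_y\SSS^{2n-1}$ on both factors get identified with $T_q\Sigma$ using $J_0$ and the metric. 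Under these identifications, $d_s\xi$ becomes $\lambda_s(\Id - \langle\cdot,\xi_q\rangle\xi_q)\circ(\text{inclusion})$ which on $T_q\Sigma$ acts as $\lambda_s$ times the ambient projection; a short computation shows that in the chosen frame the relevant contribution is $\lambda_s P_s$ (the $J_0$ applied to the normal component of the position vector reproduces the almost-contact tensor), while $\pm J_0\circ d_sG$ becomes $\pm A_s$ after the identification absorbs one factor of $J_0$. Thus the combined $(2)(2n-1)\times(2)(2n-1)$ matrix, after column operations subtracting the diagonal block, has determinant (up to a manifestly positive factor coming from $\lambda_s^{2n-1}$ and the orientation identifications) equal to $\det(A_s \pm \lambda_s P_s)$, whose sign is exactly $\varepsilon_{\ell(x)}$ by definition.

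I expect the main obstacle to be bookkeeping of orientations and signs: tracking how the coorientation convention $N\times o_\Sigma = o_{\rr^{2n}}$, the pushed-forward orientations on $X_\pm$ and $\mathcal{D}$, and the identifications $T_y\SSS^{2n-1}\cong T_q\Sigma$ (which depend on whether $\xi_q = +J_0N$ or $-J_0N$) conspire so that the overall positive prefactor really is positive in both cases $\pm$, and that no stray factor of $(-1)^{2n-1}$ or $(-1)^{n-1}$ flips the answer. The geometric content — that $d\xi$ contributes $\lambda_s P_s$ and $\pm J_0\,dG$ contributes $\pm A_s$ — is a direct unwinding of definitions, but getting the sign to come out as $\sgn\det(A_s\pm\lambda_s P_s)$ rather than its negative requires care. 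A clean way to organize this is to verify the formula first in the model case $n=1$ (where $P=0$, $\mathcal{D}\subset\SSS^1\times\SSS^1$, and the answer must reduce to Polyak's curvature sign), then check one low-dimensional case such as $n=2$ using the explicit $3\times 3$ matrix displayed above, and finally observe that the general computation is the block-diagonal version of these, so the sign prefactor is controlled uniformly.
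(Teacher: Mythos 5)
Your proposal follows essentially the same route as the paper: fix an adapted orthonormal frame $(\xi_q, J_0\xi_q, e_1, J_0e_1,\ldots)$ compatible with the orientations, express $d_{\xi_q}\Delta + d_s\phi_\pm$ as a $2(2n-1)\times 2(2n-1)$ block matrix, reduce by block column operations to $\det\bigl(B_\pm-\lambda_s\mathrm{Diag}(0,1,\ldots,1)\bigr)$, and then factor out the matrix $C_\pm$ of $\pm J_0|_{T_q\Sigma}$ (which has determinant $1$) so that the projection term turns into $\pm\lambda_sP_s$ and the Gauss-map term into $A_s$. The sign bookkeeping you flag as the main obstacle is exactly what the paper's explicit frame choice handles, so the plan is sound and matches the paper's argument.
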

\begin{proof}
Suppose that $x=(\xi_q, \pm J_0N(q))\in X\pitchfork\mathcal{D}$, 
in particular, $\xi_q = \pm J_0N(q)$. Let us find the local
intersection number $I_x(\mathcal{D}, X;M)$  of $\mathcal{D}$ with $X$
in $M$ at the point $x$. We fix an orientation
$o_{M}:=o_{\sss^{2n-1}}\times
o_{\sss^{2n-1}}$ on $M$, where $o_{\sss^{2n-1}}$ is the
orientation on $\SSS^{2n-1}$ defined by the outer normal
vector field $\nu$, so that $\nu\times o_{\sss^{2n-1}} =
o_{\rr^{2n}}$. Note that
$T_q(\RR^{2n}\minus\{p\})=\RR\xi_q\oplus(\RR\xi_q)^{\perp}$,
and $(\RR\xi_q)^{\perp}=T_{\xi_q}\SSS^{2n-1}$. So one can
choose mutually  orthogonal unit vectors $e_1, \ldots,
e_{n-1}\in T_{\xi_q}\SSS^{2n-1}$, such that
$$T_q(\RR^{2n}\minus\{p\})=\Span_{\rr}(\xi_q, J_0\xi_q, e_1,
J_0e_1, \ldots, e_{n-1}, J_0e_{n-1})$$ and the orientation
of the ordered basis $(\xi_q, J_0\xi_q, e_1, J_0e_1, \ldots,
e_{n-1}, J_0e_{n-1})$ equals $o_{\rr^{2n}}$. Note that in
this basis we also have that
$$T_{\xi_q}\SSS^{2n-1}=\Span_{\rr}(J_0\xi_q, e_1, J_0e_1,
\ldots, e_{n-1}, J_0e_{n-1})$$ and the orientation of the
ordered basis $(J_0\xi_q, e_1, J_0e_1, \ldots, e_{n-1},
J_0e_{n-1})$ equals $o_{\sss^{2n-1}}$. Next, since
$T_q\Sigma=(\RR N(q))^{\perp}=(\RR(- J_0\xi_q))^{\perp}$ we
have that $$T_q\Sigma=\Span_{\rr}(\pm\xi_q, e_1, J_0e_1,
\ldots, e_{n-1}, J_0e_{n-1})$$ and the orientation of the
ordered basis $(\pm\xi_q, e_1, J_0e_1, \ldots, e_{n-1},
J_0e_{n-1})$ equals  $o_{\Sigma}$.
Now, if $s=\imath^{-1}(q)\in S$, the local intersection number $I_x(\mathcal{D},
X;M)$ equals to
the sign of $\det(d_{\xi_q}\Delta+d_s\phi_\pm)$ (depending on $x\in X_+$ 
or $x\in X_-$) in the following frames:
$$
 \Span_{\rr}\(J_0\xi_q, (e_i, J_0e_i)_{i=1}^{n-1}\)
\oplus
(d_s\imath)^{-1}\(\Span_{\rr}\(\pm\xi_q, (e_i, J_0e_i)_{i=1}^{n-1}\)\)
$$
for $T_{\xi_q}\SSS^{2n-1}\times T_sS$ and 
$$
\Span_{\rr}\(J_0\xi_q, (e_i, J_0e_i)_{i=1}^{n-1}\)\oplus\Span_{\rr}\(J_0\xi_q,
(e_i, J_0e_i)_{i=1}^{n-1}\).
$$
for $T_{(\xi_q, \xi_q)}M$. In these frames the matrix $A_\pm$ of 
$d_{\xi_q}\Delta+d_s\phi_\pm$ equals to
\begin{equation}\label{equation:Jacobian matrix of intersection}
A_\pm=
\(
\begin{array}{cc}
\Id_{2n-1}&\lambda_sDiag(0,1,\ldots,1)
\\
\\
\Id_{2n-1}&B_\pm
\end{array}
\),
\end{equation}
where $\lambda_s=\|q-p\|^{-1}$ and $B_\pm$ is the matrix 
of the differential 
 $d_s\(\pm J_0\circ G\)=\(\pm J_0|_{T_q\Sigma}\)\circ d_sG$.

It follows that  $\det(A_\pm)=\det\(B_\pm-\lambda_sDiag(0,1,\ldots,1)\)$. 
In the chosen frames the matrix of $\pm J_0|_{T_q\Sigma}$  equals
$$C_\pm:=1
\oplus 
\pm
\( 
\underbrace{\(\begin{array}{cc}0&-1\\1&0 \end{array}\)
\oplus 
\ldots
\oplus  
\(\begin{array}{cc}0&-1\\1&0 \end{array}\)}_{n-1}
\)
$$
\noindent and the matrix of $\pm\lambda_sP_s$ equals 
$\lambda_sC_\pm Diag(0,1,\ldots,1).$
Moreover, $C_+C_-=C_-C_+=\Id_{2n-1}$ and $\det(C_\pm)=1$. As a consequence, 
$$\det(A_\pm)=\det\(d_sG\pm\lambda_sP_s\)=\det\(A_s\pm\lambda_sP_s\)$$
and hence $I_x(\mathcal{D}, X;M)=\varepsilon_{\ell(x)}$.
\end{proof}

\begin{cor}
We have \quad $\displaystyle \mathcal{N}=\sum\limits_{\ell\in\mathcal{L}(p,
\Sigma)}\varepsilon_\ell=I(\mathcal{D},
X;M).$
\end{cor}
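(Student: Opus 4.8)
The plan is to obtain the Corollary directly from the Proposition just established, the only additional ingredient being the dictionary between isotropic lines through $p$ and the points of $X\cap\mathcal{D}$. Observe first that the equality $\mathcal{N}=\sum_{\ell\in\mathcal{L}(p,\Sigma)}\varepsilon_\ell$ is merely the definition of $\mathcal{N}$, so the entire content is the identification with the intersection number $I(\mathcal{D},X;M)$.

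First I would make the correspondence $x\mapsto\ell(x)$ precise. A point of $X\cap\mathcal{D}$ has the form $x=(\xi_q,\pm J_0N(q))$ with $\xi_q=\pm J_0N(q)$, where $q=\imath(s)$; since $J_0N(q)$ spans $\ker(\omega_0|_{T_q\Sigma})\subset T_q\Sigma$, this says exactly that the line through $p$ and $q$ is tangent to $\Sigma$ at $q$ parallel to the characteristic distribution, i.e. $\ell(x)\in\mathcal{L}$, and $x$ lies in $X_+$ or $X_-$ according to the sign. Conversely, every $\ell\in\mathcal{L}$ returns such a point. Invoking general position condition~2 (each line in $\mathcal{L}$ is tangent to $\Sigma$ at a single non-singular point) together with $X_+\cap X_-=\varnothing$, the assignment $x\mapsto\ell(x)$ is a bijection between the finite set $X\cap\mathcal{D}$ and $\mathcal{L}$.

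Then the conclusion is a one-line computation: by the definition of the intersection number and transversality of $X$ and $\mathcal{D}$, $I(\mathcal{D},X;M)=\sum_{x\in X\pitchfork\mathcal{D}}I_x(\mathcal{D},X;M)$, and by the Proposition each local intersection number equals $\varepsilon_{\ell(x)}$; re-indexing along the bijection $x\mapsto\ell(x)$ turns the right-hand side into $\sum_{\ell\in\mathcal{L}}\varepsilon_\ell=\mathcal{N}$. There is no serious obstacle here: the Corollary is purely a bookkeeping consequence of the Proposition. The one point that warrants care is the claim that $x\mapsto\ell(x)$ is a genuine bijection onto $\mathcal{L}$ — that no line is over- or under-counted, and that $X\cap\mathcal{D}$ is finite — and this is exactly what the general position hypotheses of Section~\ref{subsect:generalpositionand signs} are arranged to ensure. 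With the Corollary in hand, the remaining work toward Theorem~\ref{thm:main result} is to evaluate $I(\mathcal{D},X;M)$ by an independent route, e.g. by a homotopy of $X$ or a degree-theoretic computation on $M=\SSS^{2n-1}\times\SSS^{2n-1}$, and to match the outcome with $2\ind(\Sigma)-2\ind_p(\Sigma)$.
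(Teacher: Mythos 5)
Your argument is correct and is exactly the paper's (implicit) reasoning: the paper states the bijection $x\mapsto\ell(x)$ and the transversality/finiteness just before the Proposition, and the Corollary then follows by summing the local intersection numbers $I_x(\mathcal{D},X;M)=\varepsilon_{\ell(x)}$ over this bijection. No difference in approach; the paper simply leaves this bookkeeping unstated.
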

The next proposition finishes the proof of the main theorem
\begin{prop}
We have \quad $I(\mathcal{D}, X;M)=2\ind(\Sigma)-2\ind_p(\Sigma)$. 
\end{prop}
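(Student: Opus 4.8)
The plan is to compute $I(\mathcal{D}, X; M)$ by a homological/degree-theoretic argument rather than point-counting, exploiting the product structure $M = \SSS^{2n-1}\times\SSS^{2n-1}$ and the fact that $X = X_+ \sqcup X_-$ splits. First I would observe that the diagonal $\mathcal{D} = \Delta(\SSS^{2n-1})$ represents a well-understood homology class in $H_{2n-1}(M;\ZZ)$: writing $[\mathcal{D}] = a[\SSS^{2n-1}\times\mathrm{pt}] + b[\mathrm{pt}\times\SSS^{2n-1}]$, one checks via intersection with the coordinate spheres that $a = b = 1$ when $2n-1$ is odd (which it always is here), so $[\mathcal{D}] = [\SSS^{2n-1}\times\mathrm{pt}] + [\mathrm{pt}\times\SSS^{2n-1}]$. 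Since the intersection number depends only on the homology class of $\mathcal{D}$ (the class of $X$ being fixed, and $X$ compact), I may replace $\mathcal{D}$ by a transverse representative of this sum, e.g.\ $(\SSS^{2n-1}\times\{v\}) \sqcup (\{w\}\times\SSS^{2n-1})$ for generic regular values $v, w$.

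Next I would evaluate the two resulting intersection numbers separately for each of $X_+$ and $X_-$. The map $\phi_\pm = (\xi, \pm J_0\circ G): S \to M$ has first component $\xi$ of degree $\ind_p(\Sigma)$ and second component $\pm J_0 \circ G$; since $J_0$ is an orientation-preserving diffeomorphism of $\RR^{2n}$ and induces on each sphere fiber an orientation-preserving (for the chosen $o_{\sss^{2n-1}}$) bundle map in odd real dimension, $\deg(\pm J_0\circ G) = (\pm 1)^{2n-1}\deg(G) = \pm\ind(\Sigma)$ — here I must track carefully the sign $(\pm 1)^{2n-1} = \pm 1$ and whether $J_0$ preserves $o_{\sss^{2n-1}}$, using the oriented-basis computation already set up in the previous Proposition. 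Then $I(\SSS^{2n-1}\times\{v\}, X_\pm; M) = \deg(\text{second component of }\phi_\pm) = \pm\ind(\Sigma)$, because intersecting $X_\pm$ with $\SSS^{2n-1}\times\{v\}$ counts preimages of $v$ under $\pm J_0\circ G$ with sign; similarly $I(\{w\}\times\SSS^{2n-1}, X_\pm; M) = \deg(\xi) = \ind_p(\Sigma)$. Summing over $X_+$ and $X_-$: the $\SSS^{2n-1}\times\mathrm{pt}$ contributions give $\ind(\Sigma) + (-\ind(\Sigma)) = 0$?? — this is the delicate point, so I would instead recompute with the \emph{orientations of $X_\pm$ as submanifolds of $M$} (induced by $o_S$ via $\phi_\pm$) rather than naive degrees, because $\phi_+$ and $\phi_-$ differ by the antipodal-type map in the second factor which in odd dimension reverses orientation, flipping the induced orientation on $X_-$ relative to $X_+$; this sign flip converts a cancellation into an addition, yielding $2\ind(\Sigma)$ from the first type and $-2\ind_p(\Sigma)$ from the second (the minus coming from how $o_M$, $o_{\sss^{2n-1}}$, and the diagonal's class interact, matching equation~\eqref{eq:N motivation} in the $n=1$ toy case).

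The main obstacle I anticipate is precisely the bookkeeping of orientations and signs: getting the relative sign between the $X_+$ and $X_-$ contributions right, and pinning down the global sign so that the answer is $+2\ind(\Sigma) - 2\ind_p(\Sigma)$ and not its negative or some other combination. To control this I would run the whole computation through the explicit oriented frames of the preceding Proposition (the basis $(\xi_q, J_0\xi_q, e_1, J_0 e_1,\dots)$), and as an independent check specialize to $n=1$, where $M = \SSS^1\times\SSS^1$, $\phi_\pm = (\xi, \pm J_0 G)$ with $J_0$ rotation by $\pi/2$, $\mathcal{D}$ the diagonal circle, and the formula must reduce to the known $\mathcal{N} = 2\ind(\Gamma) - 2\ind_p(\Gamma)$ of \cite{L-P1}. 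A secondary, easier point is justifying that $I(\mathcal{D}, X; M)$ is a homological invariant: this is standard since $X$ is a compact cycle and $\mathcal{D}$ can be homotoped within its class through maps transverse to $X$ for generic choices, so replacing $\mathcal{D}$ by the split representative $(\SSS^{2n-1}\times\{v\})\sqcup(\{w\}\times\SSS^{2n-1})$ is legitimate. Finally, the integral formula mentioned in the introduction would follow by expressing each degree as $\int_S \phi_\pm^*(\text{pr}_i^*\mu)$ with $\mu$ the normalized volume form on $\SSS^{2n-1}$, via the higher-dimensional Hopf Umlaufsatz.
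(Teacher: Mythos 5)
Your strategy is the paper's own: split the diagonal class as $[\SSS^{2n-1}\times\mathrm{pt}]+[\mathrm{pt}\times\SSS^{2n-1}]$, replace $\mathcal{D}$ by a split transverse representative, and read off each piece of the intersection number as the degree of a component of $\phi_\pm$. The gap is in the one place you yourself flag as delicate, and it is a real error rather than an omission. You assert $\deg(\pm J_0\circ G)=(\pm1)^{2n-1}\deg(G)=\pm\ind(\Sigma)$, and then, to escape the resulting cancellation, claim that the antipodal map of the second sphere factor ``in odd dimension reverses orientation'' and therefore flips the induced orientation of $X_-$. Both claims are false. The antipodal map of $\SSS^{m}$ has degree $(-1)^{m+1}$ (the determinant of $-\Id$ on the ambient $\RR^{m+1}$, not of $-\Id$ on the $m$-dimensional tangent space, which is where your exponent $2n-1$ comes from); for $m=2n-1$ this is $(-1)^{2n}=+1$, so the antipodal map of $\SSS^{2n-1}$ \emph{preserves} orientation and $\deg(-J_0\circ G)=\deg(J_0\circ G)=\deg(G)$. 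Likewise there is no extra orientation twist on $X_-$: both $X_+$ and $X_-$ carry the pushforward of $o_S$ under $\phi_\pm$ and nothing else. You land on the correct total $2\ind(\Sigma)$ only because these two mistakes cancel; as written, the argument would not survive scrutiny, and your proposed $n=1$ sanity check cannot detect the problem because the compounded errors also reproduce the known answer there.

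The paper settles exactly this point by an explicit frame computation instead of an appeal to antipodal-map degrees: in the oriented frames of the preceding Proposition the matrix of $\pm J_0|_{T_q\Sigma}$ is $C_\pm=1\oplus(\pm J)^{\oplus(n-1)}$ with $J$ the standard $2\times2$ rotation block, so $\det(C_\pm)=\left((\pm1)^2\det J\right)^{n-1}=+1$ for \emph{both} signs. Hence each of $X_+$ and $X_-$ contributes $+\deg(G)$ to $I(\SSS^{2n-1}\times\{\theta_2\},X;M)$, and each contributes $-\deg(\xi)$ to $I(\{\theta_1\}\times\SSS^{2n-1},X;M)$, the minus sign coming from the odd block transposition in the $(4n-2)\times(4n-2)$ determinant
$$\det\(
\begin{array}{cc}
0_{2n-1}&d\xi\\
\Id_{2n-1}&d(\pm J_0\circ G)
\end{array}
\)=(-1)^{(2n-1)^2}\det(d\xi)=-\det(d\xi),$$
which you correctly anticipate but do not pin down. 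If you replace your degree formula by $\deg(-J_0\circ G)=+\deg(G)$ and delete the orientation flip, the $X_+$ and $X_-$ contributions simply add and you recover $2\ind(\Sigma)-2\ind_p(\Sigma)$ by the intended route.
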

\begin{proof}

We shall use the homological interpretation of the intersection number. 
Namely, $$I(\mathcal{D},
X;M)=\Delta_*[\mathbb{S}^{2n-1}]\bullet(\phi_+)_*[S]+\Delta_*[\mathbb{S}^{2n-1}]
\bullet (\phi_-)_*[S],$$
where 
$$\bullet:H_{2n-1}(M;\mathbb{Z})\times
H_{2n-1}(M;\mathbb{Z})\to H_0(M;\mathbb{Z})\cong\mathbb{Z}$$
is the homological intersection product, i.e.
$$\alpha\bullet\beta=\(PD(\alpha)\cup PD(\beta)\)\cap[M] = PD(\beta)\cap\alpha =
- PD(\alpha)\cap\beta.$$
Note that for any $\theta_1, \theta_2\in\mathbb{S}^{2n-1}$, the class
$\Delta_*[\mathbb{S}^{2n-1}]$ splits as 
$$\Delta_*[\mathbb{S}^{2n-1}]=\(\Delta_{\theta_1}\)_*[\mathbb{S}^{2n-1}]
+\(\Delta_{\theta_2}\)_*[\mathbb{S}^{2n-1}],$$
where the embeddings $\Delta_{\theta_1}, \Delta_{\theta_2}:\mathbb{S}^{2n-1}\to
M$ are given by
$$\Delta_{\theta_1}(\theta)=(\theta_1,
\theta)\;\;\text{and}\;\;\Delta_{\theta_2}(\theta)=(\theta, \theta_2).$$
Next we fix $\theta_1, \theta_2\in\mathbb{S}^{2n-1}$, such that $\theta_1$ 
is a regular value of the first coordinate 
$\xi:S\to\mathbb{S}^{2n-1}$ of the map $\phi_\pm$, $\theta_2$  
is a regular value of the second coordinate 
$\pm J_0\circ G:S\to\mathbb{S}^{2n-1}$ of the map $\phi_\pm$ 
and  a smooth submanifold
$\mathcal{D}_{\theta_i}:=\Delta_{\theta_i}(\mathbb{S}^{2n-1}),\ 
i=1,2$ intersects transversally with $X$ in nonsingular points . 
Since the manifold $\mathcal{D}_{\theta_i}$ represents the class 
$\(\Delta_{\theta_i}\)_*[\mathbb{S}^{2n-1}]$ for $i=1,2$, it follows
$$I(\mathcal{D}, X;M)=I(\mathcal{D}_{\theta_1}, X;M)+I(\mathcal{D}_{\theta_2},
X;M).$$
It remains to prove the following 
\begin{lem}\label{lemma: intersection equals index}
We have $I(\mathcal{D}_{\theta_1}, X;M)=-2\ind_p(\Sigma)$ and 
$I(\mathcal{D}_{\theta_2}, X;M)=2\ind(\Sigma)$.
\end{lem}

\noindent\textit{Proof of Lemma~\ref{lemma: intersection equals index}.}\; Firstly, we treat $$I(\mathcal{D}_{\theta_1}, X;M)=
I(\mathcal{D}_{\theta_1}, X_+;M)+I(\mathcal{D}_{\theta_1}, X_-;M).$$
We have 
$$I(\mathcal{D}_{\theta_1}, X_\pm;M)=
\sum_{x_\pm\in\mathcal{D}_{\theta_1}\pitchfork
X_\pm}I_{x_\pm}(\mathcal{D}_{\theta_1}, X_\pm;M)$$
and in the chosen frames
$$
I_{x_\pm}(\mathcal{D}_{\theta_1}, X_\pm;M)=\sgn\det
\(
\begin{array}{cc}
0_{2n-1}&d_{s(x_\pm)}\xi
\\
\\
\Id_{2n-1}&d_{s(x_\pm)}\(\pm J_0\circ G\)
\end{array}
\),
$$
where ${s(x_\pm)}=\phi_\pm^{-1}(x_\pm)$. It follows that 
$$I(\mathcal{D}_{\theta_1}, X_\pm;M)=
\sum_{x_\pm\in\mathcal{D}_{\theta_1}\pitchfork
X_\pm}-\sgn\det\(d_{s(x_\pm)}\xi\)=-\deg(\xi)$$
and hence, $I(\mathcal{D}_{\theta_1}, X;M)=-2\ind_p(\Sigma)$.

In the same way we get that 
$$I(\mathcal{D}_{\theta_2}, X_\pm;M)=
\sum_{x_\pm\in\mathcal{D}_{\theta_2}\pitchfork
X_\pm}I_{x_\pm}(\mathcal{D}_{\theta_2}, X_\pm;M)$$
and in the chosen frames
$$
I_{x_\pm}(\mathcal{D}_{\theta_2}, X_\pm;M)=\sgn\det
\(
\begin{array}{cc}
\Id_{2n-1}&d_{s(x_\pm)}\xi
\\
\\
0_{2n-1}&d_{s(x_\pm)}\(\pm J_0\circ G\)
\end{array}
\),
$$
where ${s(x_\pm)}=\phi_\pm^{-1}(x_\pm)$. Recall that $\det(\pm
J_0|_{T_{\imath(s(x_\pm))}\Sigma})=1$. 
It follows that 
$$I(\mathcal{D}_{\theta_2}, X_\pm;M)=
\sum_{x_\pm\in\mathcal{D}_{\theta_2}\pitchfork X_\pm}\sgn\det\(d_{s(x_\pm)}\(\pm
J_0\circ G\)\)=\deg(G)$$
and hence, $I(\mathcal{D}_{\theta_2}, X;M)=2\ind(\Sigma)$.
\end{proof}

Using the Poincar\'{e} duality, we immediately get an integral formula for $\mathcal{N}$.
Indeed, let
$\delta\in H_{dR}^{2n-1}(M;\mathbb{R})$ be the Poincar\'{e}  dual class of
$\Delta_*[\mathbb{S}^{2n-1}]$
in the de Rham cohomology. Explicitly,  the class $\delta$ is given by
$[pr_1^*\mu]+[pr_2^*\mu]$,
where $\mu\in\Omega^{2n-1}(\SSS^{2n-1};\RR)$ is the volume form normalized 
by  $\int\limits_{\sss^{2n-1}}\mu=1$ and $pr_1, pr_2 :\SSS^{2n-1}\times
\SSS^{2n-1}\to\SSS^{2n-1}$  
are the natural projections. 
\begin{cor}
We have \quad  $\displaystyle
\mathcal{N}=-\int\limits_S(\phi_+^*(\delta)+\phi_-^*(\delta))$.
\end{cor}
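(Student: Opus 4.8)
The plan is to convert the intersection number $\mathcal{N}=I(\mathcal{D},X;M)$ into an integral over $S$ by a purely formal application of Poincar\'{e} duality, reusing the homological expansion from the proof of the preceding Proposition. I would start from
$$I(\mathcal{D},X;M)=\Delta_*[\mathbb{S}^{2n-1}]\bullet(\phi_+)_*[S]+\Delta_*[\mathbb{S}^{2n-1}]\bullet(\phi_-)_*[S],$$
and rewrite each summand using the sign convention $\alpha\bullet\beta=-PD(\alpha)\cap\beta$ recorded above together with $PD\bigl(\Delta_*[\mathbb{S}^{2n-1}]\bigr)=\delta$. This turns the right-hand side into $-\delta\cap(\phi_+)_*[S]-\delta\cap(\phi_-)_*[S]$, viewed in $H_0(M;\RR)\cong\RR$.

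Next I would invoke naturality of the Kronecker pairing. Since $\delta\in H^{2n-1}(M;\RR)$ and $(\phi_\pm)_*[S]\in H_{2n-1}(M;\RR)$ have complementary degrees, under the canonical identification $H_0(M;\RR)\cong\RR$ the cap product $\delta\cap(\phi_\pm)_*[S]$ is exactly the evaluation $\langle\delta,(\phi_\pm)_*[S]\rangle$, and by naturality this equals $\langle\phi_\pm^*\delta,[S]\rangle=\int_S\phi_\pm^*(\delta)$. Adding the two contributions yields
$$\mathcal{N}=I(\mathcal{D},X;M)=-\int_S\bigl(\phi_+^*(\delta)+\phi_-^*(\delta)\bigr),$$
which is the assertion. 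Since $\delta$ may be replaced by any closed de Rham representative of $PD\bigl(\Delta_*[\mathbb{S}^{2n-1}]\bigr)$, writing it in the product basis in terms of $pr_1^*\mu$ and $pr_2^*\mu$ and using $pr_1\circ\phi_\pm=\xi$, $pr_2\circ\phi_\pm=\pm J_0\circ G$ makes the integrand fully explicit as a combination of pullbacks of the normalized volume form $\mu$ of $\mathbb{S}^{2n-1}$ under the Gauss-type maps $\xi$ and $\pm J_0\circ G$.

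The argument is entirely formal, so the only genuinely delicate point is sign and orientation bookkeeping: one must use the same convention relating $\bullet$, $\cup$, $\cap$, $PD$, and the orientation $o_M$ as was used when the local intersection numbers $I_x(\mathcal{D},X;M)$ and the values $I(\mathcal{D}_{\theta_i},X;M)$ were computed in the preceding propositions and in Lemma~\ref{lemma: intersection equals index}. The signs entering the explicit de Rham representative of $\delta$ are then pinned down by requiring the resulting integral to be compatible with the value $2\ind(\Sigma)-2\ind_p(\Sigma)$ of Theorem~\ref{thm:main result}, using $\int_S\xi^*\mu=\deg(\xi)=\ind_p(\Sigma)$ and $\int_S(\pm J_0\circ G)^*\mu=\deg(G)=\ind(\Sigma)$ (the latter via $\det(\pm J_0|_{T\Sigma})=1$); once these signs are fixed, no further geometric input is required.
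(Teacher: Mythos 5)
Your proposal is correct and takes essentially the same route as the paper: the corollary is obtained there, too, as an immediate consequence of the homological identity $I(\mathcal{D},X;M)=\sum_{\pm}\Delta_*[\mathbb{S}^{2n-1}]\bullet(\phi_\pm)_*[S]$, the convention $\alpha\bullet\beta=-PD(\alpha)\cap\beta$, and naturality of the Kronecker pairing, with $\delta=PD(\Delta_*[\mathbb{S}^{2n-1}])$ expanded in the K\"unneth basis. Your caution about the signs in the explicit de Rham representative is well placed: with the paper's stated representative $\delta=[pr_1^*\mu]+[pr_2^*\mu]$ the integral would evaluate to $-(2\ind_p(\Sigma)+2\ind(\Sigma))$, which is incompatible with Theorem~\ref{thm:main result}, so the two K\"unneth summands must in fact carry opposite signs (as is standard for the diagonal class of an odd-dimensional sphere); the corollary itself, stated for the abstract Poincar\'e dual class, is unaffected.
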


\end{document}